\documentclass[12pt]{amsart}

\usepackage{amsfonts, amsmath, amsthm, enumerate}

\input xy
\xyoption{matrix}\xyoption{arrow}\xyoption{curve}\xyoption{frame}

\def\la{\Lambda}
\def\ga{\Gamma}
\def\lk{\ell_k}

\def\NN{\mathbb{N}}
\def\RR{\mathbb{R}}

\def\QQ{\mathbb{Q}}
\def\CC{\mathbb{C}}

\def\RRp{{\RR^+}}

\def\th{^\textrm{th}}
\def\Gal{\operatorname{Gal}}
\def\Om{\operatorname{\Omega}}

\def\mod{\operatorname{mod}}
\def\smod{\operatorname{\underline{mod}}}
\def\Db#1{{D^b(#1)}}

\def\Dbst#1{{D^b_{\rm st}(#1)}}

\def\cx#1{\operatorname{cx_{#1}}}
\def\cxl{\cx{\la}}

\newtheorem{thm}{Theorem}[section]

\theoremstyle{plain}
\newtheorem{FinSyzComp}[thm]{Theorem}
\newtheorem{DbstEquiv2}[thm]{Theorem}

\theoremstyle{plain}
\newtheorem{LiftLower}[thm]{Lemma}

\theoremstyle{plain}
\newtheorem{ExactCx}[thm]{Corollary}
\newtheorem{DbstEquiv}[thm]{Corollary}

\theoremstyle{plain}
\newtheorem{ConvWellDef}[thm]{Proposition}
\newtheorem{ConvPolyExp}[thm]{Proposition}
\newtheorem{LowerBoundCx}[thm]{Proposition}
\newtheorem{StrongQuiv}[thm]{Proposition}
\newtheorem{QuiverComplexity}[thm]{Proposition}
\newtheorem{Curvatures}[thm]{Proposition}
\newtheorem{Partial}[thm]{Proposition}
\newtheorem{Partial2}[thm]{Proposition}
\newtheorem{Partial3}[thm]{Proposition}

\newtheorem{Sinkfree}[thm]{Proposition}

\theoremstyle{definition}
\newtheorem{Fibonacci}[thm]{Example}
\newtheorem{VanRadSqu}[thm]{Example}
\newtheorem{StrongRep1}[thm]{Example}
\newtheorem{StrongRep2}[thm]{Example}
\newtheorem{StrongRep3}[thm]{Example}

\begin{document}

\address{department of mathematics, university of california, santa barbara, ca, 93106-3080}
\email{thoward@math.ucsb.edu}

\title[Poly-exponential complexities]{Representations of monomial algebras have poly-exponential complexities}
\author{Tom Howard}

\begin{abstract}
We use directed graphs called ``syzygy quivers'' to study the asymptotic growth rates of the dimensions of the syzygies of representations of finite dimensional algebras.  For any finitely generated representation of a monomial algebra, we show that this growth rate is poly-exponential, i.e. the product of a polynomial and an exponential function, and give a procedure for computing the corresponding degree and base from a syzygy quiver.  We characterize the growth rates arising in this context: The bases of the occurring exponential functions are the real, nonnegative algebraic integers $b$ whose irreducible polynomial over $\QQ$ has no root with with modulus larger than $b$.  Moreover, we show that these growth rates are invariant under stable derived equivalences.
\end{abstract}

\maketitle

\section{Introduction}

A {\sl monomial algebra} is a finite dimensional algebra $\la$ over a field $k$ with a presentation $\la = kQ/I$, where $Q$ is a finite quiver and $I$ an admissible ideal of $kQ$ generated by paths.  As a tool to study finitistic dimensions of such algebras, C. Cibils introduced syzygy quivers, combinatorial devices which encode the syzygies of certain modules \cite{Cibils}.  We refine this approach in order to explicitly compute complexities over monomial algebras and algebras which are derived equivalent to monomial algebras.

The ``complexity'' of a finitely generated module $A$ over a finite dimensional algebra $\la$ measures the dimension growth of the syzygies of $A$.  Various numerical measures of this type have been studied in the literature, including the Alperin-Carlson complexity (\cite{Alperin}, \cite{CarlsonComp}) and L. Avramov's curvature \cite{Avramov}.  The Alperin-Carlson complexity was first introduced in the context of group algebras and has since been studied primarily for modules over selfinjective algebras (see \cite{Alperin}, \cite{CarlsonComp}, \cite{SuppVarHoch}, \cite{SuppVar}, \cite{Bergh}, \cite{BerghOppermann}, \cite{DanEd}, \cite{DanOtto}) .  Our notion of complexity sharpens the previously considered ones; it can be consider a hybrid of the Alperin-Carlson complexity and L. Avramov's curvature, at least when $\la$ is a monomial algebra.  More precisely, for a finitely generated module over a monomial algebra, we show that the sequence of dimensions of its syzygies grows poly-exponentially, i.e. grows as the product of an exponential function and a polynomial.  The base of the exponential function is the curvature of the module in the sense of \cite{Avramov}.  When the curvature is one, the degree of the polynomial is one less than the Alperin-Carlson complexity of the module.  For larger curvatures, the degree of the polynomial is a new homological invariant.  We use syzygy quivers to explicitly compute each of these two parameters.  We also show that a real number is the curvature of some finitely generated module (over some monomial algebra) if and only if it is a nonnegative algebraic integer and no root of its irreducible polynomial is larger than it in modulus.

Monomial algebras are particularly amenable to study by syzygy quivers because each finitely generated module $A$ has {\sl finite syzygy type}, meaning that there is a finite list of modules $A_1, A_2, \ldots, A_r$ with the property that each syzygy of $A$ is a direct sum of copies of the $A_i$ \cite{BirgeKen}.  Indeed, for a monomial algebra $\la = kQ/I$, B. Huisgen-Zimmermann has shown in \cite{PredictingSyzygies} that each submodule of a projective $\la$-module is isomorphic to a direct sum of principal left ideals generated by paths of positive length in $kQ\backslash I$, so each finitely generated $\la$-module has finite syzygy type.  The results we present here apply more broadly to artin algebras over which each finitely generated module has finite syzygy type (or to individual modules with finite syzygy type).

R.-O. Buchweitz has defined the {\sl stable derived category} of a module category to be the Verdier quotient of its derived category by the thick subcategory of perfect complexes \cite{Buchweitz}.  We say that two artin algebras are {\sl stably derived equivalent} if the stable derived categories of their respective module categories are equivalent as triangulated categories.  In particular, two artin algebras are stably derived equivalent if they are derived equivalent.  We use syzygy quivers to show that if each finitely generated module over an artin algebra $\la$ has finite syzygy type, and $\ga$ is an artin algebra which is stably derived equivalent to $\la$, then each finitely generated $\ga$-module has finite syzygy type, and the spectrum of complexities of finitely generated $\la$-modules matches the spectrum of complexities of finitely generated $\ga$-modules.  This result is a useful and practical tool towards showing that a given algebra is not derived equivalent to a given monomial algebra.  See \cite{StrongTilt} for some examples of algebras which are derived equivalent to monomial algebras.  See also \cite{BerghOppermann} for some connections between representation dimension, complexity, and the stable derived categories of certain Gorenstein algebras.

If $A$ is a module that fails to have finite syzygy type, we will show ``partial syzygy quivers'' to still be useful in finding lower bounds for the complexity of $A$.  These lower bounds can often be transported through stable derived equivalences.

I thank my PhD advisor Birge Huisgen-Zimmermann for fruitful ideas, suggestions, and conversations.  I would also like to thank Rob Ackermann, Darren Long, Charles Martin, and Jon McCammond for their insights and for directing my attention to various resources.

\section{Complexity of a $\la$-module}

Throughout, $\la$ is an artin $k$-algebra over a commutative artinian ring $k$ with Jacobson radical $J$.  All $\la$-modules we consider will be finitely generated left modules, and we will denote by $\mod \la$ the category of finitely generated left $\la$-modules.  

The complexity of a $\la$-module is a measure of the rate of growth of its syzygies.  We write $\Om A$ for the first syzygy of $A$ (i.e., the kernel of a projective cover of $A$), and put $\Om^0 A = A$ and $\Om^{n+1} A = \Om(\Om^n A)$ for $n \ge 0$.  The {\sl complexity} of $A$, written $\cxl(A)$, is defined to be the complexity class of the sequence $\lk \Om^n A$, where $\lk$ denotes the composition length over $k$, and the complexity class of a sequence is as follows.

Let $f:\NN \to \RRp$ be a sequence of nonnegative real numbers.  The {\sl complexity class} of $f$, written $[f(n)]$, is the collection of sequences $g:\NN \to \RRp$ for which there are real numbers $c_1, c_2 > 0$ with $c_1 g(n) \le f(n) \le c_2 g(n)$ for all but finitely many $n \in \NN$.  We obtain a partial ordering by setting $[f(n)] \le [g(n)]$ if there is a real number $c > 0$ with $f(n) \le c g(n)$ for all but finitely many $n \in \NN$.  Moreover, each pair of complexity classes $[f(n)]$ and $[g(n)]$ has a least upper bound with respect to this partial ordering, given by $[f(n)] + [g(n)] = [(f+g)(n)]$.

Many of our examples will be finite dimensional path algebras modulo relations, i.e. will be of the form $\la = kQ/I$, where $k$ is field, $Q$ a finite quiver, and $I$ an admissible ideal in $kQ$.  For each vertex $v$, we let $e_v$ be the corresponding primitive idempotent of $\la$, and $S_v = \la e_v / J e_v$ the corresponding simple module.  If $I$ may be generated by paths in $Q$, then $\la$ is a monomial algebra.

\begin{Fibonacci}
\label{Fibonacci}
Let $k$ be a field, $Q$ be the quiver 
$$\xymatrix{
1 \ar@/^/[r]<0.75ex>^\alpha & 2 \ar@/^/[l]<0.75ex>^\beta \ar@(ur,dr)^\gamma
}$$
and $\la = kQ/\langle \textrm{all paths of length 2} \rangle$.  Observe that $\Om^{n+2} S_1 \simeq \Om^{n+1} S_1 \oplus \Om^n S_1$ for all $n \ge 0$, while both $\Om^0 S_1$ and $\Om^1 S_1$ are simple.  Computing dimensions, we find that $\dim_k \Om^n S_1$ is the $n\th$ Fibonacci number, so $\cxl(S_1) = [\phi^n]$, where $\phi = \frac{1+\sqrt{5}}{2}$ is the golden ratio.
\end{Fibonacci}

Clearly, $\cxl(A) = [0]$ if and only if $A$ has finite projective dimension.  It is also easy to see that for each $\la$-module $A$, there is a real number $b \ge 0$ with $\cxl(A) \le [b^n]$.  Indeed, for each $\la$-module $C$, there is a surjection $\la^c \to C$, where $c = \lk C$.  If we set $b = \lk \la$, we find that $\lk \Om C \le b \lk C$.  So $\lk \Om^n A \le b^n \lk A$.

The {\sl curvature} of a $\la$-module $A$ is $\kappa(A) = \inf\{b \in \RR \mid \cxl(A) \le [b^n] \}$ \cite{Avramov}.  For any sequence $f:\NN \to \RRp$ with $[f(n)] \le [b^n]$ for some $b \in \RR$, we define the curvature $\kappa(f)$ analogously, and say that $f$ has {\sl finite curvature}.  If $f$ has finite curvature, then its generating function $F(x) = \sum_{n=0}^{\infty} f(n) x^n$ has a radius of convergence equal to $1/\kappa(f)$.

In section \ref{SyzygyQuivers}, we show that any $\la$-module $A$ with finite syzygy type has poly-exponential complexity, i.e. $\cxl(A) = [b^n n^\ell]$ for some real number $b = \kappa(A) \ge 0$ and integer $\ell \ge 0$.  A $\la$-module $A$ has {\sl finite syzygy type} if there is a finite list of $\la$-modules $A_1, A_2, \ldots, A_r$ such that $\Om^n A$ is a direct sum of copies of the $A_i$ for each integer $n \ge 0$.  Every module over a monomial algebra $\la$ has finite syzygy type since, as shown in \cite{PredictingSyzygies}, each submodule of a projective module is isomorphic to a direct sum of principal left ideals generated by paths of positive length in $kQ\backslash I$.

To prove that modules with finite syzygy type have poly-exponential complexities, we will make use of the discrete convolution of sequences and a condition on complexity classes which we call translation invariance.  A complexity class $[f(n)]$ is {\sl translation invariant} if $[f(n+1)] = [f(n)]$.  One may check that this condition is independent of one's choice of representative from the complexity class.  Poly-exponential complexity classes are translation invariant.

The {\sl discrete convolution}, or {\sl Cauchy product}, of two sequences $f,g:\NN \to \RRp$ is given by $(f*g)(n) = \sum_{s=0}^n f(s)g(n-s)$.  If $F(x)$ and $G(x)$ are the generating functions of $f$ and $g$ respectively, $F(x)G(x)$ is the generating function of $f*g$.  It follows that $*$ is associative, commutative, distributes across addition, and has an identity element $\varepsilon$: the sequence with generating function $1$.  In general, the convolution of two complexity classes $[f(n)]*[g(n)] = [(f*g)(n)]$ is not well-defined.  Indeed, $[0] = [\varepsilon]$, while $0*f = 0$ and $\varepsilon*f = f$ for any sequence $f$.  Additionally, if $\tau$ is the sequence with generating function $x$, then $[\varepsilon] = [\tau]$, yet $[(\tau*f)(n)] = [f(n)]$ if and only if $[f(n)]$ is translation invariant.  This suggests that we restrict our attention to translation invariant complexity classes and avoid the zero sequence.  The next proposition allows us to unambiguously define the convolution of two translation invariant complexity classes by $[f(n)]*[g(n)] = [(f*g)(n)]$, with the caveat that we refrain from choosing the zero sequence when working with the complexity class $[0]$.  Moreover, the proposition guarantees that the resulting complexity class is itself translation invariant, and that convolution with a fixed translation invariant complexity class preserves the ordering on complexity classes.

\begin{ConvWellDef}
\label{ConvWellDef}
Let $f, f', g, g', h:\NN \to \RRp$, none identically zero, $[f(n)] = [f'(n)]$, $[g(n)] = [g'(n)]$, $[h(n)] \le [f(n)]$, and each of these complexity classes be translation invariant.  Then $f*g$ is not identically zero, $[(f*g)(n)] = [(f'*g')(n)]$, $[(h*g)(n)] \le [(f*g)(n)]$, and $[(f*g)(n)]$ is translation invariant.
\end{ConvWellDef}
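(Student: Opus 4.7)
The non-triviality of $f*g$ is immediate: choose $n_0, m_0$ with $f(n_0), g(m_0) > 0$, and observe that $(f*g)(n_0+m_0)$ contains the positive term $f(n_0) g(m_0)$. For the other three claims, my plan is to first establish a single technical lemma: if $v:\NN\to\RRp$ has translation invariant complexity class and $u:\NN\to\RRp$ is not identically zero, then (i) for each fixed $k\ge 0$ there is a constant $C_k$ with $v(n-k) \le C_k v(n)$ and $v(n) \le C_k v(n-k)$ eventually in $n$, and (ii) there is a constant $C$ with $v(n) \le C(u*v)(n)$ eventually.  Part (i) is a straightforward iteration of the defining inequalities of translation invariance; part (ii) follows by picking $s_0$ with $u(s_0) > 0$ and combining the single-term lower bound $(u*v)(n) \ge u(s_0) v(n-s_0)$ with (i).

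With the lemma in hand, the inequality $[(h*g)(n)] \le [(f*g)(n)]$ reduces to a boundary/bulk split.  Choose $N$ so that $h(s) \le c f(s)$ for $s \ge N$, then
\[(h*g)(n) = \sum_{s=0}^{N-1} h(s) g(n-s) + \sum_{s=N}^{n} h(s) g(n-s).\]
The bulk is at most $c(f*g)(n)$.  The boundary has finitely many terms, each dominated by a constant times $g(n)$ via (i), and $g(n) \le C'(f*g)(n)$ via (ii).  The equality $[(f*g)(n)] = [(f'*g')(n)]$ then follows by applying this argument twice: first to get $[(f*g)(n)] \le [(f'*g)(n)]$ (with $h = f$, treating $g$ as fixed and using $[f] = [f']$), then to get $[(f'*g)(n)] \le [(f'*g')(n)]$ (by commutativity of convolution, treating $f'$ as fixed and using $[g] = [g']$), and symmetrically for the reverse inequality.

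For translation invariance of $[(f*g)(n)]$, write
\[(f*g)(n+1) = f(0)g(n+1) + \sum_{s=0}^n f(s+1) g(n-s),\]
use $f(s+1) \le c_f f(s)$ on the bulk $s \ge N_f$ (noting that $[f]$ is translation invariant), and control the boundary terms $f(0)g(n+1)$ and $f(s+1)g(n-s)$ for $s < N_f$ via the lemma applied with $v = g$.  The reverse inequality $(f*g)(n) \le C(f*g)(n+1)$ is analogous, using $f(s) \le c_f' f(s+1)$ on the bulk and the lemma on the boundary.  The main obstacle is purely notational: tracking the thresholds past which the eventual inequalities hold, and confirming that the constants produced by iterating translation invariance $k$ times stay bounded over the finite range of boundary indices.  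Since the boundary contributes only finitely many terms, each individually handled by the lemma, the bookkeeping, though tedious, presents no real difficulty.
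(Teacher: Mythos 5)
Your proposal is correct, and it follows the same basic strategy as the paper --- split the convolution at a threshold past which the defining inequalities of the complexity classes hold, bound the bulk termwise, and control the finitely many boundary terms via translation invariance --- but the organization is genuinely different. You prove the monotonicity claim $[(h*g)(n)] \le [(f*g)(n)]$ first, packaging the two key estimates (iterated translation invariance, and the eventual bound $g(n) \lesssim (f*g)(n)$ obtained from a single positive term of the convolution) into a reusable lemma, and then you deduce well-definedness $[(f*g)(n)] = [(f'*g')(n)]$ as a corollary by applying the inequality twice with commutativity. The paper instead proves well-definedness directly with a three-piece head/middle/tail decomposition and then obtains the monotonicity statement as a corollary, by replacing $h$ with a representative $h' \in [h(n)]$ that is pointwise dominated by $f$. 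For the translation invariance of $[(f*g)(n)]$ the paper is slightly slicker: it writes $(f*g)(n+1) = f(0)g(n+1) + (\phi*g)(n)$ with $\phi(n) = f(n+1)$, invokes the just-established well-definedness to conclude $[(\phi*g)(n)] = [(f*g)(n)]$, and uses $[g(n)] = [(\varepsilon*g)(n)] \le [(f*g)(n)]$ to absorb the extra term, whereas you redo a bulk/boundary estimate; both work. Your approach is a bit more systematic at the cost of more bookkeeping, while the paper's is more compact once the equality is in hand.
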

\begin{proof}
Since $f$ and $g$ are not identically zero, there exist $s, t \ge 0$ with $f(s)>0$ and $g(t)>0$.  Then $(f*g)(s+t) \ge f(s)g(t) > 0$, so $f*g$ is not identically zero.

As $[f(n)] = [f'(n)]$ and $[g(n)] = [g'(n)]$, there are real numbers $A,B,C,D>0$ and an integer $N>s,t$ with $Af'(n) \le f(n) \le Bf'(n)$ and $Cg'(n) \le g(n) \le Dg'(n)$ for all $n \ge N$.  Now, for $n \ge 2N$, $$(f*g)(n) = \sum_{i=0}^{N-1} f(i)g(n-i) + \sum_{i=N}^{n-N}f(i)g(n-i) + \sum_{i=(n-N)+1}^n f(i)g(n-i).$$  Using translation invariance of $[f(n)]$ and $[g(n)]$ together with the requirement that $N > s,t$ we see that $[(f*g)(n)] = [f(n)] + [g(n)] + [\sum_{i=N}^{n-N}f(i)g(n-i)]$.  By our choice of $N$, we find $[\sum_{i=N}^{n-N}f(i)g(n-i)] = [\sum_{i=N}^{n-N}f'(i)g'(n-i)]$, so $[(f*g)(n)] = [(f'*g')(n)]$.

Since $[h(n)] \le [f(n)]$, we may choose $h' \in [h(n)]$, not identically zero, with $h'(n) \le f(n)$ for all $n \ge 0$.  Thus $[(h*g)(n)] = [(h'*g)(n)] \le [(f*g)(n)]$.

To establish translation invariance, let $\phi(n) = f(n+1)$, and observe that $(f*g)(n+1) = f(0)g(n+1) + (\phi*g)(n)$.  Since $[g(n)] = [(\varepsilon*g)(n)] \le [(f*g)(n)]$, we have $[(f*g)(n+1)] = [(\phi*g)(n)] = [(f*g)(n)]$.
\end{proof}

Now that the discrete convolution of translation invariant complexity classes is on a solid footing, we consider the discrete convolution of poly-exponential complexity classes.  The rule of thumb is that if one complexity class has larger curvature than the other, then it dominates and the term with lower curvature is ignored.  If the two curvatures are equal, the curvature of the convolution is kept and the sum of the degrees of the pertinent polynomials is increased increased by one.

\begin{ConvPolyExp}
\label{ConvPolyExp}
Given real numbers $a ,b \ge 0$ and integers $r,\ell \ge 0$, $$\left[a^n n^r\right] * \left[b^n n^\ell \right] =
\begin{cases}
\left[b^n n^\ell \right] & a < b \\
\left[b^n n^{\ell + r + 1} \right] & a = b \\
\left[a^n n^r \right] & a > b
\end{cases}$$
\end{ConvPolyExp}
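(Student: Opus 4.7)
The plan is to work directly with the representatives $f(n) = a^n n^r$ and $g(n) = b^n n^\ell$ (with the convention $0^0 = 1$) and establish two-sided polynomial bounds on the convolution
$$(f*g)(n) = \sum_{s=0}^n a^s s^r b^{n-s}(n-s)^\ell.$$
Since poly-exponential complexity classes are translation invariant, Proposition \ref{ConvWellDef} guarantees that such bounds translate into the asserted identities of complexity classes. By commutativity of $*$ it suffices to treat $a \le b$.

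For the case $a < b$, I would factor $b^n$ out of the sum, writing $(f*g)(n) = b^n \sum_{s=0}^n (a/b)^s s^r (n-s)^\ell$. The upper bound follows from $(n-s)^\ell \le n^\ell$ together with the convergence of $\sum_{s=0}^\infty (a/b)^s s^r$ (a geometric series weighted by a polynomial, convergent because $a/b < 1$), yielding $(f*g)(n) \le C b^n n^\ell$ for some constant $C$. For the matching lower bound I would retain a single appropriately chosen term: when $r = 0$ the $s=0$ term already gives $b^n n^\ell$, and when $r > 0$ the $s=1$ term gives $(a/b)\, b^n (n-1)^\ell$, which lies in $[b^n n^\ell]$ by translation invariance.

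For the case $a = b$, pulling out the common exponential leaves $(f*g)(n) = a^n \sum_{s=0}^n s^r (n-s)^\ell$, and the task reduces to showing that this inner ``beta-like'' sum lies in the complexity class $[n^{r+\ell+1}]$. The upper estimate $\sum_{s=0}^n s^r (n-s)^\ell \le (n+1)\, n^{r+\ell}$ is immediate. For the lower estimate I would restrict summation to the middle third $n/3 \le s \le 2n/3$, on which $s^r$ and $(n-s)^\ell$ exceed constant multiples of $n^r$ and $n^\ell$ respectively; summing over the $\Theta(n)$ indices in that range produces a lower bound of order $n^{r+\ell+1}$.

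The argument is essentially elementary, and I do not anticipate a serious obstacle. The only mildly delicate point is the case split $r = 0$ versus $r > 0$ in the lower bound for $a < b$, forced by the convention $0^r = 0$ for $r > 0$. Beyond that, all estimates are standard comparisons between a sum of nonnegative terms and either a single term of it or a convergent tail, and the framework set up in Proposition \ref{ConvWellDef} does the rest of the work in passing from bounds on representatives to equalities of complexity classes.
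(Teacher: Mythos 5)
Your argument is correct for $a,b>0$ and is genuinely different from the paper's. You estimate the convolution $\sum_{s=0}^n a^s s^r\, b^{n-s}(n-s)^\ell$ directly: for $a<b$ you pull out $b^n$ and use absolute convergence of the weighted geometric series $\sum_s (a/b)^s s^r$ for the upper bound with a single-term lower bound, and for $a=b$ you reduce to the elementary sum $\sum_{s=0}^n s^r(n-s)^\ell$, bounded above by $(n+1)n^{r+\ell}$ and below by restricting to the middle third of the range. The paper instead observes that $[b^n n^s]*[b^n] = [b^n\sum_{i=0}^n i^s] = [b^n n^{s+1}]$, hence $[b^n n^\ell]$ factors as the $(\ell+1)$-fold convolution of $[b^n]$ (and likewise for $[a^n n^r]$), and then finishes purely algebraically using commutativity, associativity, and the single computation $[a^n]*[b^n]=[\max(a,b)^n]$. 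The paper's factorization is slicker and avoids all explicit asymptotics; your direct estimate is more self-contained and makes the growth mechanism visible. One small omission: you never dispatch the degenerate cases $a=0$ or $b=0$. When $a=0$ and $r>0$, your chosen representative $a^n n^r$ is identically zero, which Proposition \ref{ConvWellDef} explicitly forbids; you must replace it with a nonzero representative of $[0]$ such as $\varepsilon$, exactly as the paper's convention stipulates. This is a one-line fix, but it needs to be said, as you did flag the $0^0$ convention but not the identically-zero issue.
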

\begin{proof}
If either $a=0$ or $b=0$, the result holds trivially.  Now suppose $a,b>0$.  Observe that for any integer $s \ge 0$, we have $[b^n n^s]*[b^n] = [b^n \sum_{i=0}^{n} i^s] = [b^n n^{s+1}]$.  This allows us to write $[b^n n^\ell]$ as the convolution of $\ell+1$ copies of $[b^n]$, and similarly for $[a^n n^r]$.  Next observe that if $a < b$, then $[a^n]*[b^n] = [b^n \sum_{i=0}^n (a/b)^i] = [b^n]$.  By a similar argument, $[a^n]*[b^n] = [a^n]$ when $b < a$.  In either case, we arrive at our desired conclusion using the commutativity and associativity of convolution.
\end{proof}

\section{Syzygy quivers in $\mod \la$}
\label{SyzygyQuivers}

A {\sl syzygy quiver} in $\mod \la$ is a pair $(Q,\varphi)$, where $Q$ is a quiver and $\varphi$ assigns to each vertex $v$ of $Q$ a $\la$-module $\varphi(v) \neq 0$ such that $\Om \varphi(v) \simeq \bigoplus_w \varphi(w)^{a_{vw}}$, where $a_{vw}$ is the number of arrows from $v$ to $w$ in $Q$.  We say that the module $\varphi(v)$ {\sl lies on} the syzygy quiver $(Q,\varphi)$ {\sl at} $v$.  C. Cibils introduced certain syzygy quivers for which $\varphi(v)$ is always indecomposable in \cite{Cibils}.  We waive this requirement because the categories we consider in the next section frequently have no indecomposable objects.

Every nonzero $\la$-module lies on at least one syzygy quiver, which may be chosen to be finite if and only if the module has finite syzygy type. If $(Q,\varphi)$ is a syzygy quiver in $\mod \la$, each vertex in $Q$ necessarily has finite out-degree.

\begin{VanRadSqu}
\label{VanRadSqu}
Suppose $k$ is a field and $Q$ is any finite quiver.  Let $\la = kQ/\langle$all paths of length $2 \rangle$.  Setting $\varphi(v) = S_v$ for each vertex, we find that $(Q,\varphi)$ is a syzygy quiver in $\mod \la$.
\end{VanRadSqu}

\begin{StrongRep1}
\label{StrongRep1}
Let $k$ be a field, and let $\la$ be the commutative algebra $k[X,Y,Z]/\langle X^2,Y^2,Z^2,XZ,YZ\rangle$.  There is a unique simple $\la$-module $k$, and a sequence of indecomposable modules $B_0(=k), B_1, B_2, \ldots$ satisfying $b_n = \dim_k B_n = 2n+1$ and $\Om B_n \simeq B_{n+1} \oplus k^{n+1}$ for all $n \ge 0$.  If we let $Q$ be the quiver

$$\xymatrix{
v_0 \ar@(ul,dl)_{a_0} \ar[r] & v_1 \ar@/_/[l]_{a_1} \ar[r] & v_2 \ar@/^1pc/[ll]^{a_2} \ar[r] & \ldots,
}$$

\noindent where $a_n = n+1$ is the number of arrows from $v_n$ to $v_0$ and put $\varphi(v_n) = B_n$ for each $n \ge 0$, then $(Q,\varphi)$ is a syzygy quiver.
\end{StrongRep1}

Syzygy quivers may be used to find lower bounds for the complexity of $\la$-modules.  If each vertex in a quiver $Q$ has finite out-degree, we define the complexity of a vertex $v$ to be $\cx{Q}(v) = [f(n)]$, where $f(n)$ is the number of paths of length $n$ in $Q$ beginning at $v$.

\begin{StrongRep2}
\label{StrongRep2}
We return to Example \ref{StrongRep1} and compute both $\cx{Q}(v_0)$ and $\cxl(k)$.  Let $f_s(n)$ be the number of paths of length $n$ from $v_0$ to $v_s$, and put $g = f_0$.  Then $f_s(n) = g(n-s)$.  It follows that the number of paths of length $n$ beginning at $v_0$ is $(1*g)(n)$ and that $\dim_k \Om^n k = (b*g)(n)$.

Let $G(x) = \sum_{n=0}^{\infty} g(n) x^n$ and $A(x) = \sum_{n=0}^{\infty} a_n x^n$.  Since $g(n+1) = a_0 f_0(n) + \ldots + a_n f_n(n) = (a*g)(n)$, we have $G(x) - 1 = x G(x) A(x)$, so $G(x) = \frac{1}{1-xA(x)}$.  But $a_n = n+1$, so $G(x) = \frac{(1-x)^2}{x^2 - 3x +1}$.  We thereby have $[g(n)] = \left[ \left(\frac{3 + \sqrt{5}}{2}\right)^n\right]$, and so since $[b_n] = [n]$, $\cxl(A) = \cx{Q}(v_0) = \left[ \left(\frac{3 + \sqrt{5}}{2}\right)^n\right]$.
\end{StrongRep2}

\begin{LowerBoundCx}
\label{LowerBoundCx}
If a $\la$-module $A$ lies on a syzygy quiver $(Q,\varphi)$ at a vertex $v$, then $\cxl(A) \ge \cx{Q}(v)$.  If in addition the modules lying on $(Q,\varphi)$ have bounded composition length, then $\cxl(A) = \cx{Q}(v)$.
\end{LowerBoundCx}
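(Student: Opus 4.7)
The plan is to show by induction on $n$ that $\Om^n \varphi(v)$ decomposes as a direct sum of copies of the modules lying on $(Q,\varphi)$, with multiplicities equal to the numbers of paths of length $n$ in $Q$ starting at $v$. The defining property of a syzygy quiver handles the base case $n=1$ and gives the inductive step: if $\Om^n \varphi(v) \simeq \bigoplus_w \varphi(w)^{p_n(v,w)}$, where $p_n(v,w)$ counts paths of length $n$ from $v$ to $w$, then applying $\Om$ and using $\Om \varphi(w) \simeq \bigoplus_{w'} \varphi(w')^{a_{ww'}}$ yields multiplicities $\sum_w p_n(v,w) a_{ww'} = p_{n+1}(v,w')$, as desired. (A minor subtlety is that the syzygy of a direct sum is the direct sum of the syzygies only up to projective summands, but since each $\varphi(w) \ne 0$ lies on the quiver and the quiver relation is an equality of syzygies already, this identification is consistent.)

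Taking composition lengths over $k$ then gives
\[
\lk \Om^n A \;=\; \sum_w p_n(v,w)\,\lk \varphi(w).
\]
For the lower bound, note that $\varphi(w) \ne 0$ forces $\lk \varphi(w) \ge 1$, so $\lk \Om^n A \ge \sum_w p_n(v,w) = f(n)$, where $f(n)$ is the total number of length-$n$ paths starting at $v$. This gives $\cxl(A) \ge [f(n)] = \cx{Q}(v)$.

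For the second assertion, assume the modules lying on $(Q,\varphi)$ have composition lengths bounded above by some constant $M$. Then $\lk \Om^n A \le M \sum_w p_n(v,w) = M f(n)$, so $\cxl(A) \le [f(n)]$, and combining with the lower bound yields equality.

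I do not expect any genuine obstacle here: the main content is the inductive identification of $\Om^n \varphi(v)$, after which both inequalities on $\lk \Om^n A$ follow by summing the contributions $\lk \varphi(w)$ against the path-counting coefficients. The only point requiring a little care is the bookkeeping that the multiplicities in $\Om^n \varphi(v)$ really are path counts in $Q$ rather than some larger combinatorial quantity, which is exactly what the definition of a syzygy quiver is set up to guarantee.
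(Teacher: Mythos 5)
Your proof is correct and takes essentially the same route as the paper: establish $\Om^n A \simeq \bigoplus_w \varphi(w)^{p_n(v,w)}$ with multiplicities given by path counts, then bound $\lk \Om^n A$ below by $1 \cdot f(n)$ and (when lengths are bounded by $M$) above by $M f(n)$. Your parenthetical worry is actually a non-issue here: since the paper defines $\Om A$ as the kernel of a projective cover, and a projective cover of a direct sum is the direct sum of projective covers, $\Om$ is strictly additive on direct sums with no stray projective summands to account for.
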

\begin{proof}
For each integer $n\ge0$, $\Om^n A \simeq \bigoplus_w \varphi(w)^{b_{vw}}$, where $b_{vw}$ is the number of paths of length $n$ in $Q$ from $v$ to $w$.  Each module lying on $(Q,\varphi)$ has composition length at least one, so $\cxl(A) \le \cx{Q}(v)$.  The latter claim is easy.
\end{proof}

\begin{ExactCx}
\label{ExactCx}
If a $\la$-module $A$ lies on a finite syzygy quiver $(Q,\varphi)$ at a vertex $v$, then $\cxl(A) = \cx{Q}(v)$.
\end{ExactCx}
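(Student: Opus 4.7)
The plan is to deduce this immediately from Proposition \ref{LowerBoundCx}. That proposition has two parts: an unconditional inequality $\cxl(A) \ge \cx{Q}(v)$, and a strengthening to equality $\cxl(A) = \cx{Q}(v)$ under the hypothesis that the modules lying on $(Q,\varphi)$ have bounded composition length. So the only thing to verify for the corollary is that the finiteness of $(Q,\varphi)$ forces this boundedness hypothesis.

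This verification is trivial. Since $Q$ has only finitely many vertices, the collection of modules lying on $(Q,\varphi)$ is the finite set $\{\varphi(w) : w \in Q_0\}$. Each $\varphi(w)$ is by definition a finitely generated $\la$-module, hence has finite composition length $\lk \varphi(w) < \infty$, and the maximum of finitely many finite numbers is finite. Therefore $\sup_w \lk \varphi(w) < \infty$, which is precisely the bounded composition length hypothesis needed.

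Applying Proposition \ref{LowerBoundCx} then yields $\cxl(A) = \cx{Q}(v)$ as required. There is no real obstacle here; the corollary is essentially just recording the most useful special case of the proposition, where finiteness of the quiver is checked in practice rather than the a priori boundedness of composition lengths of the modules attached to it.
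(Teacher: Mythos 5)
Your derivation is correct and is exactly what the paper intends: the corollary is stated without proof precisely because, for a finite quiver, the modules attached to its finitely many vertices are finitely generated and hence have bounded composition length, so the second clause of Proposition~\ref{LowerBoundCx} applies directly. Nothing more needs to be said.
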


This corollary reduces the problem of computing complexities of modules with finite syzygy type to that of computing complexities of vertices in finite quivers.  We accomplish this task by studying the strongly connected components of our finite quiver.  A quiver is {\sl strongly connected} if, for each ordered pair of vertices, there is a path connecting the first vertex to the second.  A {\sl strongly connected component} of a quiver is a maximal strongly connected subquiver.  If $Q$ is a quiver, we let $Q^*$ denote the poset of strongly connected components of $Q$, with $H \le K$ is there is a path beginning at a vertex in $K$ and ending at a vertex in $H$.  Consider a finite quiver $Q$.  The existence of a uniform bound on the out-degree of the vertices ensures that $\cx{Q}(v) \le \cx{Q}(w)$ whenever there is a path from $w$ to $v$.  If $H$ is a strongly connected component of $Q$, then each vertex of $H$ has the same complexity in $Q$.  We define the {\sl complexity of $H$ in $Q$} to be $\cx{Q}(H) = [f(n)]$, where $f(n)$ is the number of paths of length $n$ in $Q$ beginning at a vertex in $H$, and observe that for each vertex $v$ in $H$, $\cx{Q}(H) = \cx{Q}(v)$.  The task of computing the complexity of a vertex in a finite quiver now reduces to the task of computing the complexity of its strongly connected component.  

To compute complexities in a finite quiver $Q$, we will need both the poset $Q^*$ and some numerical information reflecting the internal structure of the strongly connected components.  For each strongly connected component $H$, we let $\rho(H)$ be the spectral radius of some adjacency matrix of $H$.  We will call $\rho(H)$ the {\sl spectral radius of $H$}.

The following two propositions allow us to compute complexities of vertices first in a strongly connected finite quiver, and then in an arbitrary finite quiver.  We adopt the following notation for convenience.  If $H$ and $K$ are subquivers of a finite quiver $Q$, then $p(H,K,n)$ is the number of paths of length $n$ in $Q$ beginning at a vertex in $H$ and ending at a vertex in $K$.  If $\beta$ is an arrow in $Q$, then $p(H,\beta,K,n)$ will count only those paths which include the arrow $\beta$.  For purposes of this function $p$, we identify each vertex with the subquiver consisting of only this vertex and no arrows.  For instance, $\cx{Q}(v) = [p(v,Q,n)]$ and $\cx{Q}(H) = [p(H,Q,n)]$.

\begin{StrongQuiv}
\label{StrongQuiv}
Let $H$ be a strongly connected finite quiver with $\rho = \rho(H)$.  Then $\cx{H}(H) = [\rho^n]$.
\end{StrongQuiv}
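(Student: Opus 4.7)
The plan is to recast the count $p(H,H,n)$ in terms of powers of an adjacency matrix, then invoke Perron-Frobenius. Fix an ordering on the vertices of $H$ and let $M$ be the resulting adjacency matrix. Since the $(v,w)$-entry of $M^n$ counts paths of length $n$ from $v$ to $w$, we have
$$p(H,H,n) = \mathbf{1}^T M^n \mathbf{1},$$
where $\mathbf{1}$ is the all-ones column vector. The hypothesis that $H$ is strongly connected is exactly the statement that $M$ is irreducible in the Perron-Frobenius sense.

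Assuming first that $\rho = \rho(H) > 0$, I would apply the Perron-Frobenius theorem to produce strictly positive right and left eigenvectors $v$ and $u$ satisfying $Mv = \rho v$ and $u^T M = \rho u^T$. The strict positivity of $\mathbf{1}$, $u$, and $v$, all having finitely many entries, then yields positive scalars $\alpha, \beta, \gamma, \delta$ with $\alpha u \le \mathbf{1} \le \beta u$ and $\gamma v \le \mathbf{1} \le \delta v$ componentwise. Combined with the identity $u^T M^n v = \rho^n u^T v$, these componentwise bounds sandwich
$$\alpha \gamma (u^T v)\, \rho^n \;\le\; \mathbf{1}^T M^n \mathbf{1} \;\le\; \beta \delta (u^T v)\, \rho^n,$$
and $u^T v > 0$ since $u$ and $v$ are positive. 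This gives $[p(H,H,n)] = [\rho^n]$, as required.

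The degenerate case $\rho = 0$ has to be handled separately: spectral radius zero forces $M$ to be nilpotent, and an irreducible nilpotent adjacency matrix of a strongly connected finite quiver can only come from the one-vertex, no-arrow quiver. In that case $p(H,H,n) = 0 = \rho^n$ for $n \ge 1$ and the equality holds trivially. I expect the main obstacle to be largely notational: the heart of the argument is a one-line invocation of Perron-Frobenius, and what makes the sandwich work is the strict positivity of both eigenvectors, which is precisely what irreducibility of $M$ buys.
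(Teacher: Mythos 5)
Your proof is correct and follows essentially the same route as the paper: express $p(H,H,n)$ as an entrywise sum of $M^n$, invoke Perron--Frobenius for a positive eigenvector, and sandwich the sum between constant multiples of $\rho^n$, handling the one-vertex arrowless case separately. The only difference is cosmetic: the paper sandwiches using a single right Perron eigenvector $x$ (bounding $\sum_{i,j}(A^n)_{ij}x_j = \rho^n\sum_i x_i$ between $a\sum(A^n)_{ij}$ and $b\sum(A^n)_{ij}$ with $a=\min x_i$, $b=\max x_i$), whereas you use both left and right eigenvectors, which is slightly redundant but equally valid.
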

\begin{proof}
If $H$ has no arrows, then $\rho = 0$ and $\cx{H}(H) = [0]$, so the proposition holds.  Otherwise, order the vertices $v_1, \ldots, v_m$ of $H$ and let $A$ be the adjacency matrix of $H$ with respect to this ordering.  Then $A$ is an irreducible nonnegative matrix, and by the Perron-Frobenius theorem $\rho$ is a positive real eigenvalue of $A$ with a positive eigenvector \[ x = \left( \begin{array}{c}
x_1 \\
x_2 \\
\vdots \\
x_m \end{array} \right).\] 
Then $(A^n)_{ij} = p(v_i,v_j,n)$, so $$p(H,H,n) = \sum_{0 \le i,j \le m} (A^n)_{ij}.$$  Let $a = \min\{x_1, \ldots, x_m\} > 0$ and $b = \max \{x_1, \ldots, x_m \} > 0$.
Now, $$a \sum_{0 \le i,j \le m} (A^n)_{ij} \le \sum_{0 \le i,j \le m} (A^n)_{ij} x_j \le b \sum_{0 \le i,j \le m} (A^n)_{ij}.$$
But, $$\sum_{0 \le i,j \le m} (A^n)_{ij} x_j = \rho^n \sum_{0 \le i \le m} x_i.$$
Therefore $\cx{H}(H) = [\rho^n]$ as desired.
\end{proof}

\begin{QuiverComplexity}
\label{QuiverComplexity}
Let $Q$ be a finite quiver.  Given a strongly connected component $H \in Q^*$, let $b = \max \{ \rho(K) \mid K \le H \}$ and $\ell$ be maximal such that there is a chain $K_0 < K_1 < \ldots < K_\ell \le H$ in $Q^*$ with $\rho(K_i) = b$ for $0 \le i \le \ell$.  Then $\cx{Q}(H) = [b^n n^\ell]$.
\end{QuiverComplexity}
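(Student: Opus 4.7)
The plan is to decompose each path of length $n$ in $Q$ starting at a vertex of $H$ according to the sequence of strongly connected components it visits, and to express the resulting count as a sum of convolutions of path counts within individual components.

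Since $Q^*$ is a poset, a path cannot re-enter a strongly connected component after leaving it, so the sequence of visited components forms a strictly descending chain $H = L_0 > L_1 > \cdots > L_m$ in $Q^*$. Fixing such a chain $C$ together with a choice of transition arrow $\beta_i$ from $L_{i-1}$ to $L_i$ for each $1 \le i \le m$, the set of corresponding paths decomposes uniquely into tuples of sub-paths inside the $L_i$'s joined by the $\beta_i$. The count of such paths of length $n$ is therefore a convolution of path-count functions within the $L_i$'s, shifted by $m$, and is bounded above by $(p(L_0, L_0, \cdot) * \cdots * p(L_m, L_m, \cdot))(n - m)$. By Proposition \ref{StrongQuiv} each factor has complexity $[\rho(L_i)^n]$, and by Proposition \ref{ConvPolyExp} the convolution has complexity $[b_C^n n^{\ell_C}]$, where $b_C = \max_i \rho(L_i)$ and $\ell_C + 1$ is the number of indices with $\rho(L_i) = b_C$. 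Summing over the finitely many chains $C$ and transition arrow choices yields $\cx{Q}(H) \le [b^n n^\ell]$, since every such chain descending from $H$ has at most $\ell + 1$ heavy components by the definition of $\ell$.

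For the reverse inequality, I would select a specific chain $K_0 < K_1 < \cdots < K_\ell \le H$ with $\rho(K_i) = b$ for all $i$, witnessing the definitions of $b$ and $\ell$. The paths visiting these components in order (possibly through intermediate components of smaller spectral radius, which by maximality of $\ell$ cannot themselves be heavy), with arbitrary dwellings inside each $K_i$, give a contribution to $p(H, Q, n)$ equal up to a bounded shift to a convolution of $\ell + 1$ path-count functions of complexity $[b^n]$, together with factors for the intermediate components. Since the intermediate factors have strictly smaller base and are absorbed by Proposition \ref{ConvPolyExp}, this contribution has complexity $[b^n n^\ell]$, giving $\cx{Q}(H) \ge [b^n n^\ell]$.

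The principal obstacle will be making the lower bound fully rigorous at the level of individual convolutions: path counts $p(v, w, K_i, \cdot)$ between fixed endpoints of a strongly connected component can vanish on certain residues of $n$ modulo the period of $K_i$, so a naive choice of transition arrows might fail to contribute complexity $[b^n]$ in each slot. This is handled by summing over all admissible choices of transition arrows between consecutive $K_i$'s, which is equivalent to summing over entry and exit vertices within each $K_i$ and restores the total count $p(K_i, K_i, \cdot)$ of complexity $[\rho(K_i)^n] = [b^n]$ by Proposition \ref{StrongQuiv}. Translation invariance of poly-exponential complexity classes ensures that the bounded shift coming from the transition paths and arrows does not affect the resulting complexity class.
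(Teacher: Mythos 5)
Your approach is genuinely different from the paper's: you decompose each path by the full descending chain of strongly connected components it visits and sum convolutions over all such chains, whereas the paper runs an induction over the poset $Q^*$, peeling off only the top component via $p(H,Q,n) = p(H,H,n) + \sum_{\beta}\bigl(p(H,s\beta,\cdot) * p(t\beta,Q,\cdot)\bigr)(n-1)$ over arrows $\beta$ leaving $H$, then applying Proposition \ref{StrongQuiv}, the inductive hypothesis, and Proposition \ref{ConvPolyExp}. The inductive route largely avoids the periodicity problem you correctly flag, because the only within-component factor it ever needs, $p(H,s\beta,n)$, has a free starting vertex and is therefore positive for all $n$.

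Your upper bound is sound: bounding each fixed-endpoint factor by $p(L_i,L_i,\cdot)$ and applying Propositions \ref{StrongQuiv} and \ref{ConvPolyExp} gives $\cx{Q}(H) \le [b^n n^\ell]$. The gap is in the lower bound, and the repair you propose does not close it. Summing over transition arrows does not in general recover $p(K_i,K_i,\cdot)$: the relation $K_i < K_{i+1}$ guarantees only a path, not an arrow, between the components; when one instead fixes connecting paths, their lengths vary with the endpoint pair $(w_{i+1},u_i)$, so the sum over entry and exit vertices is entangled with the shift in the convolution and does not factor into $p(K_i,K_i,\cdot)$; and in any case only vertices incident to inter-component connections would be summed, which need not exhaust $K_i$. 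What actually rescues the argument is an observation you have not made: after fixing one connecting path per consecutive pair (so the total shift is bounded), the final factor $p(u_0,K_0,\cdot)$ has a free terminal vertex and is therefore supported on all $n$ with $p(u_0,K_0,n) = \Theta(\rho(K_0)^n) = \Theta(b^n)$. Convolving this single everywhere-positive factor against the $\ell$ fixed-endpoint factors --- each $\Theta(b^n)$ on one residue class modulo the period of its $K_i$ and zero elsewhere --- still produces $\Theta(b^n n^\ell)$, because the free variable in the last slot absorbs the congruence constraints of the others. Without some such argument the lower bound does not follow from the convolution identity as you have stated it: a convolution of two genuinely periodic factors can vanish on an entire residue class and hence fail to lie in the complexity class $[b^n n]$.
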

\begin{proof}
Let $H \in Q^*$ with $\rho = \rho(H)$, and let $b$ and $\ell$ be as above.  If $H$ is minimal with respect to $\le$,  then $b = \rho$, $\ell = 0$, and $\cx{Q}(H) = \cx{H}(H) = [\rho^n]$ as desired.  Now suppose $H$ is not minimal, but that the proposition holds for each $K < H$.

Given an arrow $\alpha$, let $s \alpha$ be the starting vertex and $t \alpha$ be the ending vertex.  Let $\mathcal B$ be the set of arrows $\beta$ with $s \beta$ in $H$ and $t \beta$ not in $H$.  Each path beginning at a vertex in $H$ either remains in $H$ or includes precisely one arrow from $\mathcal B$.  So $p(H,Q,n) = p(H,H,n) + \sum_{\beta \in \mathcal B} p(H,\beta,Q,n)$.

Observe that $p(H,\beta,Q,n) = \sum_{i=0}^{n-1} p(H,s\beta,i)p(t\beta,Q,n-1-i)$.  Let $f(n) = p(H,s\beta,n)$ and $g_\beta(n) = p(t\beta,Q,n)$.  Then $p(H,\beta,Q,n) = (f*g_\beta)(n-1)$ for $n > 0$.  It follows that $\cx{Q}(H) = [f(n)]*\sum_{\beta \in \mathcal B} [g_\beta(n)]$.  By Proposition \ref{StrongQuiv}, we know $[f(n)] = [\rho^n]$.

We now have three cases.  First, if $\rho=b$ and $\ell=0$, then $\rho(K) < b$ for each $K < H$ and $\sum_{\beta \in \mathcal B} [g_\beta(n)] = [a^n n^s]$ for some $a < b$.  But then $\cx{Q}(H) = [b^n]*[a^n n^s] = [b^n]$.

Second, if $\rho = b$ and $\ell > 0$, then $\sum_{\beta \in \mathcal B} [g_\beta(n)] = [b^n n^{\ell-1}]$.  Thus $\cx{Q}(H) = [b^n]*[b^n n^{\ell-1}] = [b^n n^\ell].$

Last, if $\rho < b$, then $\sum_{\beta \in \mathcal B}[g_\beta(n)] = [b^n n^\ell]$.  So $\cx{Q}(H) = [\rho^n]*[b^n n^\ell] = [b^n n^\ell].$
\end{proof}

We have now established that the complexity of any $\la$-module with finite syzygy type is poly-exponential, and is therefore pinned down by two parameters:  the $b$ and $\ell$ of Proposition \ref{QuiverComplexity}.  The parameter $b$ is the curvature of the module.  When the curvature is zero, $\ell$ is the projective dimension.  When the curvature is one, $\ell$ is related to the complexity in the sense of Alperin.  Example \ref{VanRadSqu} shows that every finite quiver may be realized as a syzygy quiver in $\mod \la$ for some finite dimensional algebra $\la$ over a field.  A number may therefore be realized as the curvature of some module with finite syzygy type over some artin algebra $\la$ if and only if it is the spectral radius of a strongly connected finite quiver.

\begin{Curvatures}
\label{Curvatures}
The following are equivalent for a real number $b$.
\item{\rm (a)} There exists a strongly connected finite quiver with spectral radius $b$.
\item{\rm (b)} There exists an integer matrix with spectral radius $b$.
\item{\rm (c)} $b$ is a nonnegative algebraic integer and no root of its irreducible polynomial over $\QQ$ is larger than it in modulus.

Moreover, the set of real numbers satisfying the above conditions is closed under addition, multiplication, and passage to roots.  This set is therefore a dense subset of $\{0\}\cup[1,\infty)$.
\end{Curvatures}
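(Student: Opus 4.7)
The plan is to prove (a) $\Rightarrow$ (b) $\Rightarrow$ (c) $\Rightarrow$ (a) cyclically, and then read off the closure properties and density statement from characterization (c). The implication (a) $\Rightarrow$ (b) is immediate: the adjacency matrix of a finite quiver is a nonnegative integer matrix with the same spectral radius.

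For (b) $\Rightarrow$ (c), let $M$ be an integer matrix with spectral radius $b$ and pick an eigenvalue $\lambda$ of $M$ with $|\lambda| = b$. Since the characteristic polynomial $\chi_M(x)$ lies in $\ZZ[x]$, $\bar{\lambda}$ is also an eigenvalue of $M$, and so $b^2 = \lambda \bar{\lambda}$ is a product of two algebraic integers, hence an algebraic integer; as $b \ge 0$, this makes $b$ itself an algebraic integer. To verify the condition on conjugates, let $b'$ be any root of the minimal polynomial of $b$ and extend the $\QQ$-embedding $b \mapsto b'$ to a $\QQ$-embedding $\sigma$ of a Galois closure of $\QQ(b,\lambda)$ into $\CC$. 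Then $\sigma(\lambda)$ and $\sigma(\bar\lambda)$, as Galois conjugates of roots of $\chi_M$, are themselves roots of $\chi_M$, so each has modulus at most $b$. From $(b')^2 = \sigma(b^2) = \sigma(\lambda)\sigma(\bar\lambda)$ we obtain $|b'|^2 \le b^2$, as needed.

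The implication (c) $\Rightarrow$ (a) is the main obstacle. Given $b$ satisfying (c), the goal is to produce a polynomial
$$q(x) = x^n - c_{n-1}x^{n-1} - \cdots - c_1 x - c_0 \in \ZZ[x]$$
with $c_i \ge 0$, $c_0 > 0$, having the minimal polynomial $p(x)$ of $b$ as a factor, and such that every root of $q$ has modulus at most $b$. The companion matrix of such a $q$ is a nonnegative integer matrix whose underlying quiver is strongly connected (the entry $c_0 > 0$ closes a Hamiltonian cycle) and has characteristic polynomial $q(x)$; by construction $b$ is a root of $q$ and every other root has modulus at most $b$, so the spectral radius of the quiver is exactly $b$. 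Producing $q$ is the technical heart of the argument, which is essentially D. Lind's theorem on spectral radii of nonnegative integer matrices: one multiplies $p(x)$ by a suitable $s(x) \in \ZZ[x]$ whose roots lie in the closed disk $|z| \le b$, and exploits the dominance of $b$ over its conjugates to force all coefficients of the product to take the required signs. I would follow Lind's construction here.

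Both closure and density follow easily from characterization (c). If $b_1, b_2$ satisfy (c), any conjugate of $b_1 + b_2$ (respectively $b_1 b_2$, respectively $b_1^{1/k}$) has the form $\sigma(b_1) + \sigma(b_2)$ (resp.\ $\sigma(b_1)\sigma(b_2)$, resp.\ a $k$-th root of a conjugate of $b_1$) for some $\QQ$-embedding $\sigma$ of a common Galois closure, so the triangle inequality, multiplicativity of $|\cdot|$, or the monotonicity of $|\cdot|^{1/k}$ yields the required estimate; and the values themselves are algebraic integers because the algebraic integers form a ring closed under passage to roots. For density, every positive integer trivially satisfies (c), so closure under $k$-th roots places $\{m^{1/k} : m, k \in \NN\}$ in the set, which is dense in $[1,\infty)$; adjoining $0$ yields density in $\{0\} \cup [1,\infty)$. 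Lastly, if some $b$ with $0 < b < 1$ satisfied (c), the product $\pm p(0) \in \ZZ$ of the conjugates of $b$ would have modulus strictly less than $1$ and hence vanish, forcing $b = 0$, a contradiction; this confirms that the set is indeed a subset of $\{0\} \cup [1,\infty)$.
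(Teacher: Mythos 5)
Your implications (a) $\Rightarrow$ (b) and (b) $\Rightarrow$ (c) are correct and essentially the same as the paper's (your packaging of the Galois step in (b) $\Rightarrow$ (c), with $|b'|^2 = |\sigma(\lambda)||\sigma(\bar\lambda)| \le b^2$, is if anything slightly cleaner than the paper's ``$|\varphi(\omega)|\ge 1$ or $|\varphi(\bar\omega)| \ge 1$'' argument). Your treatment of closure and density is also correct and somewhat more streamlined than the paper's explicit construction of the polynomials $r(x)=\prod p(x-a_i)$ and $s(x)$.

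The genuine gap is in (c) $\Rightarrow$ (a). You propose to find $q(x) = x^n - c_{n-1}x^{n-1}-\cdots-c_0$ with $c_i \ge 0$, $c_0 > 0$, $p\mid q$, and all roots of modulus $\le b$, then take the companion matrix --- a legitimately different route from the paper, which instead cites a result (attributed to Ackermann) that an integer matrix whose characteristic polynomial has a strictly dominant positive root admits a power conjugate to a positive matrix. But you describe the construction of $q$ as ``exploiting the dominance of $b$ over its conjugates,'' and this is precisely what fails in the general case of (c): $b$ need not be a Perron number, and $p(x)$ may have several roots on the circle $|z|=b$ (e.g.\ $b=\sqrt{2}$, where $-\sqrt{2}$ is also on the circle). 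Lind's theorem and the companion-matrix trick, as usually stated, produce a primitive matrix and therefore require a \emph{unique} eigenvalue of maximal modulus; they do not apply as-is here. Any irreducible nonnegative matrix whose characteristic polynomial is divisible by such a $p$ must be periodic, and the spectrum on the circle $|z|=b$ must be $\{b\omega^j\}$ for $\omega$ a root of unity. The paper spends a full paragraph proving exactly this via Galois theory: the roots of $p$ of modulus $b$ are closed under multiplication by a primitive $m$-th root of unity $\omega$, and indeed the \emph{entire} root set of $p$ is closed under multiplication by $\omega$, so $p(x) = \tilde p(x^m)$ for some $\tilde p$ with a strictly dominant real root, to which a Lind-type argument can then be applied. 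Your sketch omits this preprocessing entirely; without it, the construction you outline will not produce a $q$ whose companion matrix has period compatible with the peripheral spectrum, and for many $b$ satisfying (c) no primitive matrix exists at all. To repair your argument, you would need to first establish the $p(x) = \tilde p(x^m)$ reduction, run your companion-polynomial construction on $\tilde p$, and then substitute $x^m$ back in.
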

\begin{proof}
First we show that the set of numbers satisfying (c) has the desired closure properties.  Let $b$ satisfy condition (c) with irreducible polynomial $p(x)$, say of degree $m$.  Given an integer $\ell \ge 0$, $\sqrt[\ell]{b}$ satisfies condition (c), since its irreducible polynomial divides $p(x^\ell)$.  Now let $c \neq 0$ be another number satifying condition (c) whose irreducible polynomial $q(x)$ has roots $a_1, a_2, \ldots, a_n$.  Consider the polynomials $r(x) = p(x-a_1)p(x-a_2) \cdots p(x-a_n)$ and $s(x) = (-q(0))^m p(\frac{x}{a_1})p(\frac{x}{a_2}) \cdots p(\frac{x}{a_n})$.  These polynomials are each monic with integer coefficients, and they establish condition (c) for $b+c$ and $bc$, since the irreducible polynomials of $b+c$ and $bc$ divide $r(x)$ and $s(x)$ repsectively.

We now show ``(a) $\Rightarrow$ (b)''.  Let $Q$ be a strongly connected finite quiver with spectral radius $b$.  By definition, $b$ is the spectral radius of an adjacency matrix of $Q$, which is an integer matrix.

Next we show ``(b) $\Rightarrow$ (c)''.  Let $A$ be an integer matrix with spectral radius $b$.  The characteristic polynomial $q(x)$ of $A$ is a monic polynomial with integer coefficients.  For some complex number $\omega$ with unit length, both $b \omega$ and its complex conjugate $b \bar{\omega}$ are roots of $q(x)$.  Then $b$ is an algebraic integer, since it is a square root of $(b \omega) (b \bar{\omega})$.  Now let $p(x)$ be the irreducible polynomial of $b$ over $\QQ$, with splitting field $L$ over $\QQ$.  Let $a$ be another root of $p(x)$, and choose $\varphi \in \Gal(L/\QQ)$ with $\varphi(b) = a$.  By hypothesis on $q(x)$, both $|\varphi(b\omega)| \le b$ and $|\varphi(b \bar{\omega})| \le b$.  Since either $|\varphi(\omega)| \ge 1$ or $|\varphi(\bar{\omega})| \ge 1$, we conclude $|a| \le b$.

Finally, we show ``(c) $\Rightarrow$ (a)''.  Let $p(x)$ be the monic irreducible polynomial of $b$ over $\QQ$.  Up to sign, $p(0)$ is the product of the roots of $p(x)$, so either $b=0$ or $b\ge 1$.  Each of zero and one is the spectral radius of some finite strongly connected quiver, so we focus on the case $b>1$.

The polynomial $p(x)$ has no repeated roots since it is irreducible over $\QQ$.  Let $m$ be the number of distinct roots of $p(x)$ with modulus equal to $b$, and let $\omega$ be a primitive $m\th$ root of unity.  Suppose that the set of roots of $p(x)$ is closed under multiplication by $\omega$, and so $p(x)=q(x^m)$ for some polynomial $q(x)$.  Then $q(x)$ has a nonnegative real root $\sqrt[m]{b}$ which is strictly larger than the modulus of any other root.  This property of $q(x)$ ensures that any integer matrix $M$ with characteristic polynomial $q(x)$ has a power $M^n$, with $n \ge 0$, which is conjugate to a matrix with only positive integer entries \cite{Ackermann}.  So $M^{mn}$ is conjugate to the adjacency matrix of a strongly connected quiver $Q$ with spectral radius $b^n$.  By replacing each arrow in $Q$ with a path of length $n$, we arrive at a finite strongly connected quiver with spectral radius $b$.

We now justify our supposition that the set of roots of $p(x)$ is closed under multiplication by the primitive $m\th$ root of unity $\omega$.  We let $B$ be the set of roots of $p(x)$ with modulus equal to $b$, and let $L$ be the splitting field of $p(x)$ over $\QQ$.  We first show that each automorphism $\varphi \in \Gal(L/\QQ)$ which sends $b$ to a root in $B$ must in fact permute the roots in $B$.  Indeed, suppose $\varphi \in \Gal(L/\QQ)$ with $\varphi(b) = b \eta \in B$ and let $b \nu \in B$.  We also have the complex conjugate $b \bar{\nu} \in B$, and $\varphi(b \nu) \varphi(b \bar{\nu}) = \varphi(b)^2$ has modulus $b^2$.  No root of $p(x)$ has modulus larger than $b$, so we conclude that $|\varphi(b \nu)| = b$.  Thus $\varphi$ indeed permutes the roots of $B$.  Given $\beta \in B$, one checks that $\varphi(\overline{\varphi^{-1}(\bar{\beta})}) = \beta \eta^2$.  Since both $b$ and $b \eta$ are in $B$, we have that $b \eta^i$ is a root for each $i \ge 0$.  Then $\eta$ must be a root of unity, else $p(x)$ would have infinitely many distinct roots.  Now let $a$ be any root of $p(x)$, and choose $\psi \in \Gal(L/\QQ)$ with $\psi(b) = a$.  Since $\eta$ is a root of unity, its powers are permuted by $\psi$, and there is some integer $i$ with $\psi(\eta^i) = \eta$.  Then $\psi(b \eta^i) = a \eta$.  This shows that the set of roots of $p(x)$ is closed under multiplication by $\eta$. Since $b \eta$ was an arbitrary root in $B$, this shows that the set of roots of $p(x)$ is closed under multiplication by $\omega$.
\end{proof}

We are now ready to present our main theorem on the complexity of a $\la$-module with finite syzygy type.

\begin{FinSyzComp}
\label{FinSyzComp}
Let $A$ be a $\la$-module with finite syzygy type.  Then $\cxl(A) = [b^n n^\ell]$ for some integer $\ell \ge 0$ and real number $b \ge 0$.  For each $0 \le s \le \ell$, there is an integer $m \ge 0$ and a direct summand $B$ of $\Om^m A$ with $\cxl(B) = [b^n n^s]$.

Conversely, given an integer $\ell \ge 0$ and real number $b \ge 0$, there is an artin algebra $\la$ and $\la$-module $A$ with finite syzygy type and $\cxl(A) = [b^n n^\ell]$ if and only if $b$ is an algebraic integer and no root of its irreducible polynomial over $\QQ$ has modulus greater than $b$.
\end{FinSyzComp}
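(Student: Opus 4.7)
The forward claims both follow from the syzygy-quiver machinery already developed. Since $A$ has finite syzygy type, I take a finite syzygy quiver $(Q,\varphi)$ on which $A$ lies at some vertex $v$, and let $H \in Q^*$ be the strongly connected component containing $v$. Corollary \ref{ExactCx} gives $\cxl(A) = \cx{Q}(v) = \cx{Q}(H)$, and Proposition \ref{QuiverComplexity} packages this as $[b^n n^\ell]$ together with a maximal chain $K_0 < K_1 < \ldots < K_\ell \le H$ of strongly connected components satisfying $\rho(K_i) = b$ for each $i$.

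For each $0 \le s \le \ell$, I would pick any vertex $w \in K_s$; since $K_s \le H$, some path of length $m$ in $Q$ runs from $v$ to $w$, whence $\varphi(w)$ is a direct summand of $\Om^m A \simeq \bigoplus_u \varphi(u)^{b_{vu}}$. The module $\varphi(w)$ lies on $(Q,\varphi)$ at $w$, so Corollary \ref{ExactCx} and Proposition \ref{QuiverComplexity} applied to $K_s$ would give $\cxl(\varphi(w)) = [b^n n^s]$, once I check that the longest chain of $b$-components at or below $K_s$ has length exactly $s$. I would argue this by observing that any longer such chain below $K_s$ could be concatenated with $K_{s+1} < \ldots < K_\ell$ to contradict the maximality of $\ell$.

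For the converse, one direction is immediate: the forward claim combined with Proposition \ref{Curvatures} shows that any realized $b$ must satisfy condition (c). For the other direction, given $b$ satisfying (c) and $\ell \ge 0$, I would handle $b=0$ trivially, since $[0]$ is realized by any projective module over any artin algebra, and, for $b \ge 1$, use Proposition \ref{Curvatures} to obtain a strongly connected finite quiver $Q_0$ of spectral radius $b$. I would then form $Q$ from $\ell+1$ vertex-disjoint copies $Q_0, Q_1, \ldots, Q_\ell$ of $Q_0$ by adjoining, for each $i \ge 1$, a single new arrow from a vertex of $Q_i$ to a vertex of $Q_{i-1}$, and set $\la = kQ/\langle \textrm{all paths of length } 2 \rangle$. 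By Example \ref{VanRadSqu}, the assignment $\varphi(u) = S_u$ makes $(Q,\varphi)$ a syzygy quiver in $\mod \la$, and for any $v$ in the top copy $Q_\ell$ Corollary \ref{ExactCx} with Proposition \ref{QuiverComplexity} delivers $\cxl(S_v) = [b^n n^\ell]$.

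The main point requiring care is verifying that the poset $Q^*$ of strongly connected components of the assembled $Q$ is precisely the chain $Q_\ell > Q_{\ell-1} > \ldots > Q_0$ with each $\rho(Q_i) = b$. The spectral radii are automatic since each $Q_i$ is isomorphic to $Q_0$, and because the adjoined arrows all run in a single direction between vertex-disjoint strongly connected pieces, no new cycles arise and each $Q_i$ remains a maximal strongly connected subquiver of $Q$. Once this is in hand, the rest is an assembly of the propositions already proved.
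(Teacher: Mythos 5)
Your proof is correct and follows essentially the same approach as the paper: the forward direction and the intermediate summands come from Corollary \ref{ExactCx} together with Proposition \ref{QuiverComplexity} applied to a finite syzygy quiver, and the realizability direction combines Proposition \ref{Curvatures} with the radical-square-zero construction of Example \ref{VanRadSqu}. The only cosmetic difference is in the converse: you link $\ell+1$ disjoint copies of a strongly connected quiver by single arrows, while the paper takes the product quiver $H \times P_\ell$; both produce a chain of $\ell+1$ strongly connected components of spectral radius $b$ and hence the same complexity.
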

\begin{proof}
If $A = 0$, then $\cxl(A) = [0]$ and the first half of the theorem holds trivially.  If $A$ is nonzero, it lies on some finite syzygy quiver $(Q,\varphi)$ at a vertex $v$.  Let $H$ be the strongly connected component of $Q$ containing $v$.  Then $\cxl(A) = \cx{Q}(H) = [b^n n^\ell]$, with $\ell$ and $b$ given by Proposition \ref{QuiverComplexity}.  Choose a chain $K_0 < \ldots < K_\ell \le H$  with $\rho(K_i) = b$ for each $0 \le i \le \ell$.  Given an integer $s \in \{0, \ldots, \ell \}$, choose a vertex $w \in K_s$ and a path from $v$ to $w$, say of length $m$.  Then $B = \varphi(w)$ is isomorphic to a direct summand of $\Om^m A$ with $\cxl(B) = [b^n n^s]$.

Now suppose we are given an integer $\ell \ge 0$ and a real nonnegative algebraic integer $b$, and that no root of the irreducible polynomial of $b$ has modulus greater than $b$.  Proposition \ref{Curvatures} ensures that there is a finite strongly connected quiver $H$ with spectral radius $b$.  Let $P_\ell$ be the quiver $$\ell \to \ldots \to 1 \to 0,$$ and put $Q = H \times P_\ell$.  Let $k$ be a field and $\la = kQ/\langle$all paths of length 2$\rangle$.  For each vertex $v$ of $H$, the simple $\la$-module $S_{(v,s)}$ has finite syzygy type and $\cxl(S_{(v,s)}) = [b^n n^s]$.
\end{proof}

\section{The stable derived category of $\mod \la$}

There is a canonical embedding of $\mod \la$ into its bounded derived category $\Db{\mod \la}$.  The {\sl stable derived category} of $\mod \la$, denoted $\Dbst{\mod \la}$, is the Verdier quotient of $\Db{\mod \la}$ by the thick subcategory generated by the projective $\la$-modules.  By composing functors, we obtain $\sigma_\la: \mod \la \to \Dbst{\mod \la}$, which we abbreviate as $\sigma$ when there is no confusion regarding the algebra in question.  Moreover, $\sigma$ factors uniquely through the stable category $\smod \la$, and the resulting functor $\smod \la \to \Dbst{\mod \la}$ is universal among all exact functors from $\smod \la$ to a triangulated category \cite{KellerVossieck}.  Here we consider $\smod \la$ as a left triangulated category with cosuspension $\Om$, and so we write the cosuspension on $\Dbst{\mod \la}$ as $\Om$ as well.  We say that two artin algebras $\la$ and $\ga$ are {\sl stably derived equivalent} if there is a triangle equivalence from $\Dbst{\mod \la}$ to $\Dbst{\mod \ga}$.  For example, $\la$ and $\ga$ are stably derived equivalent if $\la$ and $\ga$ are derived equivalent, or if $\la$ and $\ga$ are stably equivalent via a left triangle equivalence.

Syzygy quivers can be used to transport information through stable derived equivalences, particular when dealing with modules with finite syzygy type.  We will need only a couple basic facts regarding the stable derived category of $\mod \la$.  For each object $X$ in $\Dbst{\mod \la}$, there is a $\la$-module $A$ such that $\sigma A \simeq \Om^n X$ for some integer $n \ge 0$.  Second, given a pair of $\la$-modules $A$ and $B$, we have $\sigma A \simeq \sigma B$ if and only if $\Om^n A \simeq \Om^n B$ for some $n \ge 0$.

A {\sl syzygy quiver} in $\Dbst{\mod \la}$ is a pair $(Q,\psi)$, where $Q$ is a quiver and $\psi$ assigns to each vertex $v$ of $Q$ a nonzero object $\psi(v) \in \Dbst{\mod \la}$ with $\Om \psi(v) \simeq \bigoplus_w \psi(w)^{a_{vw}}$ where $a_{vw}$ is the number of arrows in $Q$ from $v$ to $w$.

As with $\la$-modules, each nonzero object in $\Dbst{\mod \la}$ lies on some syzygy quiver.  Moreover, this syzygy quiver will always be sinkfree.  In fact, sinkfree syzygy quivers are also sufficient for computing complexities of $\la$-modules with finite syzygy type.

\begin{Sinkfree}
\label{Sinkfree}
Each $\la$-module with infinite projective dimension and finite syzygy type lies on a finite sinkfree syzygy quiver.
\end{Sinkfree}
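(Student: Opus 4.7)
My strategy is to build a finite syzygy quiver with a distinguished vertex $v_A$ carrying $A$ and further vertices carrying the (finitely many) isomorphism classes of indecomposable non-projective modules that occur as summands of some $\Om^n A$. Because $A$ has infinite projective dimension, $A$ itself is non-projective, so every vertex of the quiver will carry a non-projective module and hence have a nonzero syzygy---which is exactly the condition for having an outgoing arrow. Hence no vertex can be a sink.

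More concretely, let $\mathcal M$ be the set of isomorphism classes of indecomposable non-projective direct summands of $\Om^n A$, taken over all $n\ge 0$. By the finite syzygy type hypothesis combined with Krull-Schmidt for finitely generated modules over the artin algebra $\la$, the set $\mathcal M$ is finite. I form the quiver $Q$ on vertex set $\{v_A\}\sqcup \mathcal M$, set $\varphi(v_A)=A$ and $\varphi(M)=M$ for $M\in\mathcal M$, and define the arrows by the syzygy quiver axiom: for each vertex $v$, decompose $\Om\varphi(v)$ into indecomposables via Krull-Schmidt and draw $a_{v,w}$ arrows from $v$ to $w$ so that $\Om\varphi(v)\simeq\bigoplus_w\varphi(w)^{a_{v,w}}$. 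To see that this decomposition only uses vertices of $Q$, I invoke the standard fact that the kernel of a projective cover over an artin algebra carries no nonzero projective direct summand; hence each indecomposable summand of $\Om\varphi(v)$ is non-projective. Since $\varphi(v)$ is a direct summand of $\Om^n A$ for some $n\ge 0$, each indecomposable summand of $\Om\varphi(v)$ is a summand of $\Om^{n+1} A$, so lies in $\mathcal M$.

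Finally, a vertex $v$ of $Q$ is a sink precisely when $\Om\varphi(v)=0$, i.e.\ when $\varphi(v)$ is projective. Neither $A$ (having infinite projective dimension) nor any $M\in\mathcal M$ (being non-projective by construction) is projective, so $(Q,\varphi)$ is sinkfree, as required. The only bookkeeping subtlety is the case in which $A$ itself has a projective direct summand, so that $A$ need not lie in $\mathcal M$: writing $A=A'\oplus P$ with $P$ projective, one has $\Om A=\Om A'$, and $A'$ is non-projective, so the standard fact applies and the outgoing arrows from $v_A$ target vertices of $\mathcal M$ exactly as described. I expect no serious obstacle beyond this minor bookkeeping and the invocation of the standard projective-cover fact.
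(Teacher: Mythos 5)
Your construction hinges on the claim that, over an artin algebra, the kernel of a projective cover has no nonzero projective direct summand, and this claim is false. Take the monomial algebra $\la = kQ/\langle \delta^2 \rangle$, where $Q$ has vertices $1$ and $2$, a loop $\delta$ at $1$, and an arrow $\alpha\colon 1 \to 2$. The projective cover of $S_1$ is $P_1 = \la e_1$, whose radical decomposes as $J e_1 = \la\alpha \oplus \la\delta$. Here $\la\alpha \simeq P_2 = S_2$ is projective, while $\la\delta$ is $2$-dimensional, local, non-projective, and satisfies $\Om(\la\delta) \simeq \la\delta$. Thus $\Om S_1 \simeq P_2 \oplus \la\delta$: the module $S_1$ has infinite projective dimension and finite syzygy type, yet its first (minimal!) syzygy has a projective direct summand. (Your cited fact does hold over self-injective algebras, where a projective summand $Q$ of $\Om A \subseteq J P$ would be injective and hence split off as a direct summand of $P$ lying inside $J P$, which is absurd; but in the present generality, and already for monomial algebras, it fails.) Consequently your quiver on $\{v_A\} \sqcup \mathcal M$ with $\mathcal M = \{\la\delta\}$ is not a syzygy quiver at all: no assignment of arrows out of $v_A$ realizes $\Om\varphi(v_A) = P_2 \oplus \la\delta$ as a direct sum of copies of $S_1$ and $\la\delta$. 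The obstruction is not bookkeeping about $A$ itself; projective summands genuinely occur inside minimal syzygies and cannot be discarded from the vertex set.

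The paper's proof avoids this by never trying to isolate the non-projective indecomposables. It begins with \emph{any} finite syzygy quiver $(Q,\varphi)$ carrying $A$ at a vertex $v$, deletes all sinks (which carry projective modules), re-assigns $\varphi'(w) = \Om\varphi(w)$ to each surviving vertex $w$, and adjoins one fresh vertex $v'$ with a single arrow to $v$ and $\varphi'(v') = A$. Applying $\Om$ annihilates exactly the projectives at the deleted sinks, so $(Q',\varphi')$ is again a syzygy quiver on which $A$ lies. Each iteration with a sink removes at least one original vertex (the adjoined vertices carry $A, \Om A, \ldots$, all non-projective since $A$ has infinite projective dimension, and so never become sinks), so the process terminates after finitely many steps in a finite sinkfree syzygy quiver. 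In other words, the paper re-bundles projective summands with their non-projective companions by passing to higher syzygies, rather than separating them out as your approach attempts; the latter cannot work without further hypotheses such as self-injectivity.
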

\begin{proof}
Let $A$ be such a $\la$-module, and choose a finite syzygy quiver $(Q,\varphi)$ and vertex $v$ with $A = \varphi(v)$.  Construct a new quiver $Q'$ by deleting each sink from $Q$ and adding a new vertex $v'$ with a single arrow from $v'$ to $v$.  Then the assignment $\varphi'(v') = A$ and $\varphi'(w) = \Om \varphi(w)$ for each vertex $w$ of $Q$ makes $(Q',\varphi')$ a syzygy quiver.  By iterating this procedure, we reach a finite sinkfree syzygy quiver on which $A$ lies.
\end{proof}

Our main tool for transporting the information encoded in syzygy quivers through stable derived equivalences is the following lemma, which describes the extent to which syzygy quivers may be lowered from $\mod \la$ to $\Dbst{\la}$ and lifted back.

\begin{LiftLower}
\label{LiftLower}
\item{\rm (a)} If $(Q,\varphi)$ is a sinkfree syzygy quiver in $\mod \la$, then $(Q,\sigma \varphi)$ is a syzygy quiver in $\Dbst{\mod \la}$.
\item{\rm (b)} If $(Q,\psi)$ is a finite syzygy quiver in $\Dbst{\mod \la}$, there is a syzygy quiver $(Q,\varphi)$ in $\mod \la$ with $\sigma \varphi = \Om^n \psi$ for some integer $n \ge 0$.
\end{LiftLower}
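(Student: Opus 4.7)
The plan is to unwind both parts using the two basic properties of $\sigma$ recalled just before the lemma, together with the fact that $\sigma$ is a functor that commutes with the cosuspension $\Om$ (since it factors through $\smod \la$) and preserves direct sums.

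For part (a), I would apply $\sigma$ termwise to the defining isomorphism $\Om \varphi(v) \simeq \bigoplus_w \varphi(w)^{a_{vw}}$ to obtain $\Om \sigma \varphi(v) \simeq \bigoplus_w \sigma \varphi(w)^{a_{vw}}$ immediately. The only nontrivial point is verifying $\sigma \varphi(v) \neq 0$ for every vertex $v$, equivalently that $\varphi(v)$ has infinite projective dimension. This is where the sinkfree hypothesis enters: starting at $v$ I can follow an infinite path $v = v_0 \to v_1 \to v_2 \to \cdots$ in $Q$ and iterate the defining isomorphism to see that $\varphi(v_n)$ is a nonzero direct summand of $\Om^n \varphi(v)$, so no syzygy of $\varphi(v)$ can vanish.

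For part (b), the first recalled property furnishes, for each vertex $v$ of $Q$, a $\la$-module $A_v$ and an integer $n_v \geq 0$ with $\sigma A_v \simeq \Om^{n_v} \psi(v)$. Since $Q$ is finite, I would set $N = \max_v n_v$ and replace $A_v$ by $B_v = \Om^{N - n_v} A_v$, producing a uniform shift $\sigma B_v \simeq \Om^N \psi(v)$ valid at every vertex. Applying $\sigma$ to the defining syzygy relation of $(Q,\psi)$ then yields
\[
\sigma(\Om B_v) \;\simeq\; \Om \sigma B_v \;\simeq\; \Om^{N+1} \psi(v) \;\simeq\; \Om^N \bigoplus_w \psi(w)^{a_{vw}} \;\simeq\; \sigma\!\left( \bigoplus_w B_w^{a_{vw}} \right).
\]
By the second recalled property, for each $v$ some integer $m_v \geq 0$ satisfies $\Om^{m_v + 1} B_v \simeq \Om^{m_v} \bigoplus_w B_w^{a_{vw}}$. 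Using finiteness of $Q$ a second time, I would let $M = \max_v m_v$ and set $\varphi(v) = \Om^M B_v$. Then $\Om \varphi(v) \simeq \bigoplus_w \varphi(w)^{a_{vw}}$ holds in $\mod \la$, and each $\varphi(v)$ is nonzero since $\sigma \varphi(v) \simeq \Om^{M+N} \psi(v) \neq 0$ (the cosuspension is an autoequivalence of the triangulated category $\Dbst{\mod \la}$). Thus $(Q,\varphi)$ is a syzygy quiver in $\mod \la$ with $\sigma \varphi \simeq \Om^{n} \psi$ for $n = M + N$.

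The main obstacle is the bookkeeping in part (b): passing from the vertex-by-vertex isomorphisms supplied by the two basic facts to a \emph{uniform} shift that serves all vertices simultaneously. Finiteness of $Q$ must be exploited twice for this purpose — once to align the lifts $A_v$ into a common $B_v = \Om^{N-n_v} A_v$, and again to align the integers $m_v$ produced by the second recalled property. Part (a), by contrast, amounts to a routine functoriality check, with the sinkfree hypothesis only entering to rule out the pathology $\sigma \varphi(v) = 0$.
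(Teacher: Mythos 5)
Your proof is correct and follows the same route as the paper's: part (a) hinges only on noting that sinkfreeness forces every $\varphi(v)$ to have infinite projective dimension (you make this explicit via an infinite path, which the paper leaves implicit), and part (b) is exactly the paper's ``choose $n$ large enough'' argument with the bookkeeping spelled out. The two uniformization steps you identify -- taking $N=\max n_v$ to align the lifts and then $M=\max m_v$ to align the syzygy relations -- are precisely the content compressed into the paper's one-sentence proof of (b), and both uses of finiteness of $Q$ are genuinely needed.
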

\begin{proof}
\item{\rm (a)} The only potential obstruction would be $\sigma \varphi(v) = 0$ for some vertex $v$.  However, each module lying on $(Q,\varphi)$ has infinite projective dimension since $Q$ is sinkfree.
\item{\rm (b)} For each vertex $v$ in $Q$, let $X_v = \psi(v)$ and let $a_{vw}$ be the number of arrows from $v$ to $w$.  Then $\Om X_v \simeq \bigoplus_w X_w^{a_{vw}}$.  By choosing $n$ large enough, there exist $\la$-modules $A_v$ such that $\Om A_v \simeq \bigoplus_w A_w^{a_{vw}}$ and $\sigma A_v \simeq \Om^n X_v$ for each vertex $v$.  Then the assignment $\varphi(v) = A_v$ yields a syzygy quiver $(Q,\varphi)$ in $\mod \la$ with $\sigma \varphi = \Om^n \psi$.
\end{proof}

\begin{DbstEquiv}
\label{DbstEquiv}
Let $\la$ and $\ga$ be artin algebras and $F: \Dbst{\mod \la} \to \Dbst{\mod \ga}$ a triangle equivalence.  Suppose $A \in \mod \la$ and $B \in \mod \ga$ with $F \sigma_{\la} A = \sigma_{\ga} B$, and $A$ has finite syzygy type.  Then $B$ has finite syzygy type and $\cx{\ga}(B) = \cxl(A)$.
\end{DbstEquiv}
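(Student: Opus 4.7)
The strategy is to transport a finite sinkfree syzygy quiver for $A$ through $F$ to a finite syzygy quiver for a high syzygy of $B$, using Lemma \ref{LiftLower} to pass between $\mod$ and $\Dbst{\mod}$ on either side of the equivalence. I would first dispose of the degenerate case: if $A$ has finite projective dimension, then $\sigma_\la A = 0$ in $\Dbst{\mod \la}$, hence $\sigma_\ga B = F\sigma_\la A = 0$, which forces $B$ to have finite projective dimension too. In that case both modules have finite syzygy type and $\cx{\ga}(B) = [0] = \cxl(A)$ trivially.

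Assume from here that $A$ has infinite projective dimension. By Proposition \ref{Sinkfree}, $A$ lies on a finite sinkfree syzygy quiver $(Q,\varphi)$ at a vertex $v$ with $\varphi(v) = A$. Lemma \ref{LiftLower}(a) produces the syzygy quiver $(Q, \sigma_\la \varphi)$ in $\Dbst{\mod \la}$. Pushing pointwise through the triangle equivalence $F$ yields $(Q, F\sigma_\la \varphi)$, which is again a syzygy quiver, this time in $\Dbst{\mod \ga}$, because $F$ commutes with the cosuspension $\Om$ and preserves finite direct sums; at the marked vertex, $F\sigma_\la \varphi(v) = \sigma_\ga B$. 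Lemma \ref{LiftLower}(b) then lifts $(Q, F\sigma_\la \varphi)$ to a syzygy quiver $(Q,\varphi')$ in $\mod \ga$ satisfying $\sigma_\ga \varphi' \simeq \Om^n F\sigma_\la \varphi$ for some $n \ge 0$, so in particular $\sigma_\ga \varphi'(v) \simeq \Om^n \sigma_\ga B \simeq \sigma_\ga \Om^n B$. The isomorphism criterion recalled in the excerpt then supplies $m \ge 0$ with $\Om^m \varphi'(v) \simeq \Om^{m+n} B$ in $\mod \ga$.

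Because $\varphi'(v)$ lies on the finite syzygy quiver $(Q,\varphi')$, it has finite syzygy type, and the displayed isomorphism transfers this property first to $\Om^{m+n} B$ and then to $B$ itself (at the cost of only finitely many additional summands among $B, \Om B, \ldots, \Om^{m+n-1} B$). In particular $\cx{\ga}(B)$ is poly-exponential and hence translation invariant, so chaining Corollary \ref{ExactCx} applied separately to $(Q,\varphi)$ and $(Q,\varphi')$ with the isomorphism $\Om^m \varphi'(v) \simeq \Om^{m+n} B$ yields
\[
\cxl(A) = \cx{Q}(v) = \cx{\ga}(\varphi'(v)) = \cx{\ga}(\Om^m \varphi'(v)) = \cx{\ga}(\Om^{m+n} B) = \cx{\ga}(B).
\]
The principal obstacle I anticipate is the middle step, namely checking that $F$ genuinely carries syzygy quivers to syzygy quivers; this reduces to the formal observation that a triangle equivalence commutes with $\Om$ and preserves biproducts, but it is the essential leverage of the entire argument. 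The shift $\Om^n$ introduced by Lemma \ref{LiftLower}(b) also requires care, but is harmlessly absorbed by the translation invariance of poly-exponential complexity classes, which is precisely why it is important to work with modules of infinite projective dimension throughout this second half.
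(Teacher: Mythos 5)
Your proposal is correct and follows essentially the same route as the paper: dispose of the finite projective dimension case, place $A$ on a finite sinkfree syzygy quiver, push through $F$ using Lemma \ref{LiftLower}(a), lift back via Lemma \ref{LiftLower}(b), and invoke translation invariance of poly-exponential complexity classes. The one place where you are a bit more careful than the paper is the lifting step: the paper writes $\psi(v) = \Om^n B$, while Lemma \ref{LiftLower}(b) as stated only gives $\sigma_\ga \psi(v) \simeq \Om^n \sigma_\ga B = \sigma_\ga \Om^n B$, so your additional shift $\Om^m \psi(v) \simeq \Om^{m+n} B$ (via the recalled isomorphism criterion) is the honest way to close that gap before appealing to translation invariance.
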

\begin{proof}
Suppose $A$ has finite syzygy type.  If $A$ has finite projective dimension, then $\sigma_{\ga} B = 0$, so $B$ has finite projective dimension too, and $\cxl(A) = \cx{\ga}(B) = [0]$.

If $A$ has infinite projective dimension, it lies on some finite sinkfree syzygy quiver $(Q,\varphi)$ at a vertex $v$.  Then $(Q,F\sigma_\la \varphi)$ is a syzygy quiver in $\Dbst{\mod \ga}$.  For some integer $n \ge 0$, there is a syzygy quiver $(Q, \psi)$ in $\mod \ga$ with $\psi(v) = \Om^n B$.  So $B$ has finite syzygy type and $\cx{\ga}(\Om^n B) = \cxl(A)$.  Since poly-exponential complexities are translation invariant, $\cx{\ga}(B) = \cxl(A)$.
\end{proof}

While the above corollary is useful whenever an artin algebra $\la$ has $\la$-modules with finite syzygy type and nontrivial complexity, we arrive at a particularly clean result when every $\la$-module has finite syzygy type.

\begin{DbstEquiv2}
\label{DbstEquiv2}
Let $\la$ and $\ga$ be artin algebras which are stably derived equivalent to one another.  Suppose each $\la$-module has finite syzygy type (e.g. $\la$ is a monomial algebra over a field).  Then each $\ga$-module has finite syzygy type, and $\{\cxl(A) \mid A \in \mod \la\} = \{\cx{\ga}(B) \mid B \in \mod \ga\}$.
\end{DbstEquiv2}
\begin{proof}
Let $F: \Dbst{\mod \la} \to \Dbst{\mod \ga}$ be a triangle equivalence and let $B \in \mod \ga$.  For some integer $n \ge 0$, we have an $A \in \mod \la$ with $F \sigma_\la A \simeq \sigma_{\ga} \Om^n B$.  By Corollary \ref{DbstEquiv}, $B$ has finite syzygy type and $\cx{\ga}(B) = \cxl(A)$.  So each $\ga$-module has finite syzygy type and $\{\cxl(A)\mid A \in \mod \la\} \supseteq \{\cx{\ga}(B)\mid B \in \mod \ga\}$.  The reverse inclusion follows from the symmetry of our set up.
\end{proof}

\section{Partial Syzygy Quivers}

Our results thus far on modules with finite syzygy type depend heavily on the existence of finite syzygy quivers.  A finite syzygy quiver has three advantages:  it encodes the precise value of the complexity of each module which lies on it; each such complexity may be computed using Proposition \ref{QuiverComplexity}; and, modulo sinks, a finite syzygy quiver may be shifted back and forth between $\mod \la$ and $\Dbst{\mod \la}$.  In this section we will consider subquivers of syzygy quivers.  We use these subquivers to obtain lower bounds for the complexities of modules lying on them.  Moreover, by choosing finite subquivers we can compute these lower bounds using Proposition \ref{QuiverComplexity}.  If we choose a sinkfree finite subquiver, we will see that it can be passed back and forth between $\mod \la$ and $\Dbst{\mod \la}$.  This allows us to use syzygy quivers in $\mod \la$ to establish lower bounds for complexities of modules in $\mod \ga$ when $\la$ and $\ga$ are stably derived equivalent, even when the modules under consideration do not have finite syzygy type.

More precisely, we say that a pair $(Q,\varphi)$ is a {\sl partial syzygy quiver} if there is a syzygy quiver $(Q',\varphi')$ such that $Q$ is a subquiver of $Q'$, and $\varphi$ the restriction of $\varphi'$ to the vertices of $Q$.  Such a syzygy quiver $(Q',\varphi')$ is called a {\sl completion} of the partial syzygy quiver $(Q,\varphi)$.

\begin{Partial}
\label{Partial}
\item{\rm (a)} Every nonzero $\la$-module lies on a finite partial syzygy quiver.
\item{\rm (b)} If a $\la$-module $A$ lies on a partial syzygy quiver $(Q,\varphi)$ at $v$, then $\cxl(A) \ge \cx{Q}(v)$.
\end{Partial}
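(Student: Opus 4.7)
The plan is to derive both parts immediately from the definition of a partial syzygy quiver together with Proposition \ref{LowerBoundCx}, by pulling a completion into view and reducing everything to the already-established facts about full syzygy quivers.

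For part (a), I would invoke the observation already recorded in Section \ref{SyzygyQuivers} that every nonzero $\la$-module lies on at least one syzygy quiver (possibly infinite). Given a nonzero $A \in \mod \la$, fix such a syzygy quiver $(Q',\varphi')$ and a vertex $v$ with $\varphi'(v) = A$. The subquiver $Q$ consisting of the single vertex $v$ with no arrows, equipped with the restriction $\varphi = \varphi'|_{\{v\}}$, is then trivially a finite partial syzygy quiver on which $A$ lies, with $(Q',\varphi')$ as an explicit completion.

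For part (b), fix any completion $(Q',\varphi')$ of $(Q,\varphi)$, so that $Q$ is a subquiver of $Q'$ and $\varphi'$ extends $\varphi$. Then $A$ lies on the honest syzygy quiver $(Q',\varphi')$ at $v$, so Proposition \ref{LowerBoundCx} supplies $\cxl(A) \ge \cx{Q'}(v)$. To close the argument, I would observe the elementary path-counting monotonicity: because every path of length $n$ in $Q$ starting at $v$ is automatically such a path in $Q'$, we have $p(v,Q,n) \le p(v,Q',n)$ for all $n$, so $\cx{Q}(v) \le \cx{Q'}(v)$. Chaining gives the required $\cxl(A) \ge \cx{Q}(v)$. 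One small bookkeeping point is to verify that $\cx{Q}(v)$ is even well-defined, which requires $v$ to have finite out-degree in $Q$; this holds because its out-degree in $Q$ is bounded by its out-degree in $Q'$, which is finite since $(Q',\varphi')$ is a syzygy quiver.

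I do not anticipate a genuine obstacle in this proof: both parts are essentially definition unpacking plus a monotone path count. The only mildly delicate point is the finite-out-degree check above, and it is settled by the completion. No new machinery beyond Proposition \ref{LowerBoundCx} and the prior existence statement for syzygy quivers is needed.
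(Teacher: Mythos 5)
Your proof is correct and follows the same route the paper takes: part (a) is dismissed as clear (your single-vertex subquiver construction is one natural way to make this explicit), and for part (b) the paper likewise chooses a completion $(Q',\varphi')$ and chains $\cx{Q}(v) \le \cx{Q'}(v) \le \cxl(A)$, the first inequality being path-count monotonicity and the second being Proposition \ref{LowerBoundCx}. Your extra remark on finite out-degree is a reasonable (if unspoken in the paper) sanity check, settled exactly as you say by the completion.
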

\begin{proof}
Part (a) is clear.  For part (b), suppose $A$ lies on a partial syzygy quiver $(Q,\varphi)$ at a vertex $v$.  Choose a completion $(Q',\varphi')$ of $(Q,\varphi)$.  Since $Q$ is a subquiver of $Q'$, we have $\cx{Q}(v) \le \cx{Q'}(v) \le \cxl(A)$.
\end{proof}

Before we address the lifting and lowering of partial syzygy quivers between $\mod \la$ and $\Dbst{\mod \la}$, we give the following characterization of partial syzygy quivers which does not depend on a completion.

\begin{Partial2}
\label{Partial2}
Let $Q$ be a quiver, and for each pair of vertices $v$ and $w$, let $a_{vw}$ be the number of arrows from $v$ to $w$ in $Q$.  Then $(Q,\varphi)$ is a partial syzygy quiver if and only if $\bigoplus_{w} \varphi(w)^{a_{vw}}$ is isomorphic to a direct summand of $\Om \varphi(v)$ for each vertex $v$ in $Q$.
\end{Partial2}
\begin{proof}
Suppose $(Q,\varphi)$ is a partial syzygy quiver, and choose a completion $(Q',\varphi')$.  For each pair of vertices $v$ and $w$ in $Q'$, let $b_{vw}$ be the number of arrows from $v$ to $w$ in $Q'$.  Since $Q$ is a subquiver of $Q'$, $a_{vw} \le b_{vw}$ whenever $v$ and $w$ are in $Q$.  Thus $\bigoplus_{w \in Q} \varphi(w)^{a_{vw}}$ is isomorphic to a direct summand of $\Om \varphi(v)$ for each vertex $v$ in $Q$.

Conversely, suppose that for each vertex $v$ of $Q$, $\bigoplus_{w} \varphi(w)^{a_{vw}}$ is isomorphic to a direct summand of $\Om \varphi(v)$, and let $A(v)$ be a complement.  We now constuct a completion $(Q',\varphi')$ of $(Q,\varphi)$.  Let $V$ be the set of vertices $v$ in $Q$ for which $A(v)$ is nonzero.  For each $v \in V$, let $(Q_v, \varphi_v)$ be a syzygy quiver with $A(v)$ lying at a vertex $w_v$.  Now let $Q'$ be the disjoint union of $Q$ and the $Q_v$, together with a single arrow from $v$ to $w_v$ for each $v \in V$.  Finally, put $\varphi'(w)$ equal to $\varphi(w)$ if $w$ is a vertex of $Q$, or else equal to $\varphi_v(w)$ if $w$ is a vertex of $Q_v$ for some $v$ in $Q$.  Then $(Q', \varphi')$ is a syzygy quiver and a completion of $(Q,\varphi)$.
\end{proof}

\begin{Partial3}
\label{Partial3}
\item{\rm (a)} Let $(Q,\varphi)$ be a sinkfree partial syzygy quiver in $\mod \la$.  Then $(Q,\sigma \varphi)$ is a partial syzygy quiver in $\Dbst{\mod \la}$.
\item{\rm (b)} Let $(Q,\psi)$ be a finite partial syzygy quiver in $\Dbst{\mod \la}$.  Then there is a partial syzygy quiver $(Q,\varphi)$ in $\mod \la$ with $\sigma \varphi = \Om^n \psi$ for some integer $n \ge 0$.
\item{\rm (c)} Suppose $\la$ and $\ga$ are stably derived equivalent artin algebras and $F: \Dbst{\mod \la} \to \Dbst{\mod \ga}$ is a triangle equivalence. If $\sigma_{\ga} B = F \sigma_{\la} A$ and $A$ lies on a finite partial syzygy quiver $(Q,\varphi)$ at $v$, then $\cx{\ga}(B) \ge \cx{Q}(v)$.
\end{Partial3}
\begin{proof}
(a) Let $(Q,\varphi)$ be a sinkfree partial syzygy quiver in $\mod \la$.  Since $Q$ is sinkfree, $\sigma \varphi(v) \neq 0$ for each vertex $v$ in $Q$.  Since $\sigma$ distributes across finite direct sums, $(Q,\sigma \varphi)$ is a partial syzygy quiver in $\Dbst{\mod \la}$.

(b) Similar to Lemma \ref{LiftLower} (b), using the characterization of partial syzygy quivers given by Proposition \ref{Partial2}.

(c) Suppose $A$ lies on a finite sinkfree partial syzygy quiver $(Q,\varphi)$ at a vertex $v$.  Then $B$ lies on $(Q, F \sigma \varphi)$ at $v$, and we may lift this to a syzygy quiver $(Q,\psi)$ in $\mod \ga$ such that $\psi(v) \simeq \Om^m B$ for some integer $m \ge 0$.  Then $\cx{\ga}(\Om^m B) \ge \cx{Q}(v)$.  Since $Q$ is finite, $\cx{Q}(v)$ is translation invariant, and $\cx{\ga}(B) \ge \cx{Q}(v)$.
\end{proof}

We conclude by illustrating these techniques in our favorite example.

\begin{StrongRep3}
\label{StrongRep3}
We return again to the setting of Examples \ref{StrongRep1} and \ref{StrongRep2}.  Suppose $\ga$ is an artin algebra stably derived equivalent to $\la$.  Then there is a $\ga$-module $C$ for which $\sigma_\ga C$ lies on a syzygy quiver $(Q,\psi)$ at $v_0$.

For each integer $s \ge 0$, let $Q_s$ be the full subquiver of $Q$ on the vertices $v_0, v_1, \ldots, v_s$.  Note that each $Q_s$ is strongly connected, and let $r_s =  \rho(Q_s)$.  Then $\cx{\ga}(C) \ge [r_s^n]$ for each $s$.  In fact, we now show that $r_s$ converges to $\frac{3+\sqrt{5}}{2}$, thereby proving the curvature of $C$ over $\ga$ is at least the curvature of $k$ over $\la$.

Since the $Q_s$ are nested subquivers of $Q$, $r_s$ is a monotonically increasing sequence in $s$ bounded above by $\frac{3+\sqrt{5}}{2}$.  Let $r$ be the limit of the sequence  $r_s$.  The characteristic polynomial of an adjacency matrix of $Q_s$ is $p_s(x) = x^{s+1} - (a_0 x^s + a_1 x^{s-1} + \ldots + a_s)$.  Thus $r_s^{-1}$ is a root of $q_s(x)=1-x(a_0 + a_1x + \ldots + a_sx^s)$.  As $s \to \infty$, we see $q_s(x) \to q(x) = 1 - x A(x)$ uniformly on the closed disk of radius $r_1^{-1} < 1$ in $\CC$.  The $q_s$ are equicontinuous by the Arzel\`{a}-Ascoli theorem, which together with uniform convergence implies that $q(r^{-1}) = 0$.  Since $\frac{3-\sqrt{5}}{2}$ is the only zero of $q(x)$ in the unit disk, $r = \frac{3 + \sqrt{5}}{2}$.
\end{StrongRep3}

\bibliographystyle{plain}
\bibliography{ComplexityPaper}

\end{document}